\documentclass{amsart}

\usepackage{graphics}
\usepackage{amsmath,amsthm}
\usepackage{amssymb}
\usepackage{amsfonts}
\usepackage{tikz}
\usetikzlibrary{tqft}
\usetikzlibrary{shapes}
\usepackage{cite}
\usepackage{graphics}
\usepackage{float}

\newtheorem{theorem}{Theorem}[section]
\newtheorem{proposition}[theorem]{Proposition}%

\newtheorem{corollary}[theorem]{Corollary}
\newtheorem{lemma}[theorem]{Lemma}
\theoremstyle{definition}
\newtheorem{remark}[theorem]{Remark}
\newtheorem{example}[theorem]{Example}
\newtheorem{definition}[theorem]{Definition}

\newcommand{\K}{\Bbbk}

\newcommand{\im}{\operatorname{im}}
\newcommand{\tr}{\operatorname{tr}}
\newcommand{\str}{\operatorname{str}}
\newcommand{\id}{\operatorname{id}}
\newcommand{\gr}{\operatorname{gr}}
\newcommand{\End}{\operatorname{End}}
\newcommand{\Pair}{\mathbf{Pair}_\K}
\newcommand{\Unc}{\mathbf{Uncurved}_\K}

\renewcommand{\geq}{\geqslant}

\begin{document}

\title{The Supertrace Defect of a Graded Endomorphism}

\author{Esma Dirican Erdal}
\address{Department of Mathematics, I\c{s}{\i}k University, \.{I}stanbul, 34980, T\"{u}rkiye}\thanks{This work was partially supported by
ITU-DOSAP (Proj.ID:43959)}
\email{esma.diricanerdal@isikun.edu.tr}

\author{Atabey Kaygun}
\address{Department of Mathematics Engineering, \.{I}stanbul Technical University, \.{I}stanbul, 34467, T\"{u}rkiye}
\email{kaygun@itu.edu.tr}

\date{\today}

\dedicatory{}

\commby{}


\begin{abstract}
Motivated by Igusa's relation between relative cyclic homology and the logarithm of the
graded Cartan determinant, we isolate the homological step behind the passage from
logarithmic determinants to traces of powers.  For a bounded chain complex $(C_*,d)$ and
a degree-zero graded endomorphism $f$, not assumed to commute with $d$, we construct the
universal quotient on which $f$ becomes a chain map.  The kernel of this reflection
iterates to a canonical functorial minimal filtration, whose Rees module gives a flat
degeneration to an uncurved associated graded.  For finite-dimensional chain groups all
power supertraces of $f$ are represented by Lefschetz traces on this associated graded,
so the resulting logarithmic-determinant and zeta series admit a canonical homological
representation.
\end{abstract}

\keywords{Lefschetz number, supertrace, chain complex, graded endomorphism, defect subcomplex}

\maketitle

\section{Introduction}

This paper grew out of an attempt to understand the mechanism behind Igusa's relation
between cyclic homology and the Cartan determinant.  For a graded finite-dimensional
algebra, Igusa relates the Euler characteristic of relative cyclic homology of the
radical to the logarithm of the determinant of the graded Cartan matrix
\cite{Igusa1992}.  At the formal level, logarithmic determinants are governed by traces
of powers through the identity
\[
  -\log\det(1-tT)=\sum_{r\geq1}\frac{\tr(T^r)}{r}t^r.
\]
Thus one of the basic transitions in such an argument is from determinant data to an
additive sequence of power traces, followed by a homological interpretation of those
traces.  The present paper isolates this intermediate step.

Our concern is deliberately narrower than Igusa's theorem.  We do not identify the
specific relative cyclic complex occurring in the Cartan-determinant problem, nor do we
address the divisor and M\"obius transforms which enter the cyclic formula.  Instead we
ask a prior question.  Suppose $(C_*,d)$ is a bounded chain complex and $f$ is a
degree-zero graded endomorphism which need not commute with $d$.  The power traces
$\str(f^r)$ are still defined when the chain groups are finite-dimensional, but the Hopf
trace formula cannot be applied because $f$ need not induce an endomorphism of homology.
Can one replace $(C_*,f)$ canonically by homological data which retain the same power
traces?

The answer begins with a universal one-step construction.  Let $\Pair$ be the category
of pairs $(C_*,f)$ and equivariant chain maps, and let $\Unc\subset\Pair$ be the full
subcategory of \emph{uncurved} pairs, those for which $f$ is a chain map.  We associate
to $(C_*,f)$ the smallest $f$-stable subcomplex containing the commutator image,
\[
  \Delta(C,f)=\sum_{j\geq0}f^j(\im[d,f]),
\]
and show that
\[
  Q(C,f):=C_*/\Delta(C,f)
\]
is the universal quotient on which $f$ becomes a chain map.  Consequently the
assignment $F(C,f)=(Q(C,f),\widetilde f)$ is left adjoint to the inclusion
$I\colon\Unc\hookrightarrow\Pair$.  In this sense the failure of $f$ to commute with the
differential has a canonical maximal quotient on which it disappears.

The adjunction itself produces the higher uncurving structure.  If
$\eta_X\colon X\to IFX$ is the unit and $KX:=\ker\eta_X$, then
$KX=\Delta(C,f)$ for $X=(C,f)$.  Iterating this kernel gives a canonical tower
\[
  X=G^0X\supseteq G^1X\supseteq G^2X\supseteq\cdots,
  \qquad G^kX=K^kX,
\]
with
\[
  \gr_G^kX\cong IF(K^kX).
\]
Thus every layer is uncurved by construction.  For bounded complexes the tower is
finite, functorial, and minimal among all filtrations whose successive quotients are
uncurved.  The filtration is therefore not an auxiliary choice: it is forced by the
unit of the reflection.

The same tower has a deformation-theoretic form.  Its Rees module
\[
  R_G(C):=\bigoplus_{k\in\mathbb Z}G^kC\,u^{-k}
          \subseteq C_*[u,u^{-1}]
\]
has fiber $(C,f)$ at $u=1$ and the uncurved associated graded at $u=0$, while
$[d,f]R_G(C)\subseteq uR_G(C)$.  For the canonical kernel tower this interpolation is
functorial.  Thus the reflection, its kernel tower, and the Rees degeneration are three
forms of the same canonical procedure: remove the obstruction once, resolve what
remains layer by layer, and package the resolution into a flat family.

For finite-dimensional chain groups, this construction gives the homological trace
statement needed at the logarithmic-determinant interface.  Trace additivity along any
uncurving filtration yields
\[
  \str_{C_*}(f)
  =\sum_k L\bigl(H_*(\gr_G^k f),H_*(\gr_G^k C_*)\bigr),
\]
and for the canonical tower the right-hand side is the Lefschetz number of a
canonically associated chain endomorphism.  More importantly for the motivating
problem, the same statement applies to every power $f^r$.  Hence
\[
  \log\!\left(\prod_n\det(1-tf_n)^{(-1)^{n+1}}\right)
  =\sum_{r\geq1}\frac{\str(f^r)}{r}t^r
\]
can be rewritten entirely in terms of Lefschetz traces of the uncurved associated
graded.  Equivalently, the power-supertrace series of an arbitrary graded endomorphism
is represented by an ordinary Lefschetz zeta function after canonical uncurving.

This is the precise point at which the present paper interfaces with the motivation from
Igusa.  Igusa's theorem continues beyond this point by identifying the relevant trace
data with the Euler characteristic of a relative cyclic complex and by applying the
arithmetic transform appropriate to the Cartan determinant.  Those cyclic and
arithmetic steps are not part of the present paper.  Our purpose is to isolate a reusable
homological mechanism which explains how trace-of-power data can first be moved into a
setting where homology, Lefschetz theory, and logarithmic determinants can interact.

The classical Hopf trace formula and Euler--Poincar\'e identity appear as limiting cases
of this mechanism.  The examples also show that the construction is not confined to the
motivating Cartan-determinant problem: it produces Lefschetz zeta functions, recovers
Banchoff's vertex index and its Reidemeister-torsion refinement, and realizes the
relative de Rham complex as a universal uncurved quotient.

Section~\ref{sec:reflection} constructs the universal one-step uncurving and proves the
reflection theorem.  Section~\ref{sec:tower} identifies the canonical uncurving
filtration with the kernel tower of the unit and proves finite termination and
minimality.  Section~\ref{sec:rees} develops the Rees deformation.  Section~\ref{sec:trace}
derives the trace consequences, with the Hopf trace formula and Euler--Poincar\'e as
corollaries.  Section~\ref{sec:examples} gives the zeta, torsion, and geometric examples.

\subsection*{Conventions}

Let $\K$ be a field of characteristic zero.  Chain complexes are homologically graded,
$d_n\colon C_n\to C_{n-1}$.  The structural constructions require boundedness but not
finite-dimensional chain groups; finite-dimensionality is imposed when traces or
determinants are used.  Graded maps obey the Koszul sign rule.  For a finite-dimensional
complex, if $X\subseteq V$ is invariant under an endomorphism $u$, we write
$\tr(u,X)$ for the trace of $u|_X$ and
\[
  L(H_*(g),H_*(C_*)):=\sum_n(-1)^n\tr(H_n(g))
\]
for the Lefschetz number of a chain endomorphism $g$.

\section{Universal One-Step Uncurving}\label{sec:reflection}

\begin{definition}\label{def:pair}
Let $\Pair$ be the category whose objects are pairs $(C_*,f)$ consisting of a bounded
chain complex and a degree-zero graded endomorphism $f$ of $C_*$.  A morphism
$\phi\colon(C_*,f)\to(D_*,g)$ is a chain map satisfying $\phi f=g\phi$.  A pair is
\emph{uncurved} if $f$ is a chain map; $\Unc\subset\Pair$ denotes the full subcategory
of uncurved pairs.
\end{definition}

For a pair $(C_*,f)$ write $[d,f]=df-fd$.

\begin{lemma}\label{lem:identities}
For every $\ell\geq1$,
\begin{align}
  d[d,f^\ell]&=-[d,f^\ell]d,\label{eq:d-commutator}\\
  [d,f^\ell]&=\sum_{j=0}^{\ell-1}f^j[d,f]f^{\ell-1-j},\label{eq:commutator-power}\\
  f[d,f^\ell]&=[d,f^{\ell+1}]-[d,f]f^\ell.\label{eq:f-stability}
\end{align}
In particular, $\im[d,f^\ell]$ is a subcomplex of $C_*$.
\end{lemma}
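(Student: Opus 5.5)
The three identities are elementary manipulations in the algebra of graded endomorphisms of $C_*$. Since $f$ has degree zero, no Koszul signs enter. The only inputs are $d^2=0$ and the definition $[d,g]=dg-gd$ for a degree-zero $g$. I will prove them in the order \eqref{eq:commutator-power}, \eqref{eq:d-commutator}, \eqref{eq:f-stability}, and then deduce the subcomplex statement.

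For \eqref{eq:commutator-power}, I will use the telescoping sum
\[
  \sum_{j=0}^{\ell-1}f^j(df-fd)f^{\ell-1-j}
  =\sum_{j=0}^{\ell-1}\bigl(f^jdf^{\ell-j}-f^{j+1}df^{\ell-1-j}\bigr)
  =df^\ell-f^\ell d .
\]
Equivalently, this is an induction on $\ell$ via the Leibniz rule $[d,gh]=[d,g]h+g[d,h]$.

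For \eqref{eq:d-commutator}, I expand both sides directly:
\[
  d[d,f^\ell]=d^2f^\ell-df^\ell d=-df^\ell d,
  \qquad
  [d,f^\ell]d=df^\ell d-f^\ell d^2=df^\ell d .
\]
So the two sides agree up to sign, and this holds for every $\ell$ without induction.

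For \eqref{eq:f-stability}, the Leibniz rule applied to $f^{\ell+1}=f\cdot f^\ell$ gives
\[
  [d,f^{\ell+1}]=[d,f]f^\ell+f[d,f^\ell],
\]
which rearranges to the claim. As a check, it also follows by comparing \eqref{eq:commutator-power} for $\ell$ and $\ell+1$: the latter sum is $f$ times the former plus the term $[d,f]f^\ell$.

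Finally, \eqref{eq:d-commutator} shows that $d$ maps $\im[d,f^\ell]$ into itself: $d[d,f^\ell]x=[d,f^\ell](-dx)$. Since $[d,f^\ell]$ is homogeneous of degree $-1$, its image is a graded subspace, so it is a subcomplex. There is no genuine obstacle here. The only care needed is to confirm that $f$ is degree zero, so the graded commutator has no sign, and that the sign in \eqref{eq:d-commutator} matches the convention $[d,f]=df-fd$.
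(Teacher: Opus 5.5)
Your proposal is correct and follows the paper's proof, which directly expands each identity: $d^2=0$ for the first, the standard telescoping commutator expansion for the second, and expansion of both sides for the third. You also derive the subcomplex claim explicitly from \eqref{eq:d-commutator} and the homogeneity of $[d,f^\ell]$, a step the paper leaves implicit.
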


\begin{proof}
The first identity follows by expanding and using $d^2=0$.  The second is the usual
commutator expansion, and the third follows by expanding its two sides.
\end{proof}

\begin{definition}\label{def:defect}
For $X=(C_*,f)\in\Pair$ define
\begin{equation}\label{eq:defect-span}
  \Delta(X)=\Delta(C,f):=\sum_{j\geq0}f^j(\im[d,f]),
  \qquad
  Q(X)=Q(C,f):=C_*/\Delta(C,f).
\end{equation}
\end{definition}

\begin{proposition}[Universal defect subcomplex]\label{prop:defect}
For every pair $(C_*,f)$ one has
\begin{equation}\label{eq:defect-equivalent}
  \Delta(C,f)=\sum_{\ell\geq1}\im[d,f^\ell].
\end{equation}
The space $\Delta(C,f)$ is the smallest $f$-stable subcomplex of $C_*$ containing
$\im[d,f]$.  Consequently $f$ induces a chain endomorphism $\widetilde f$ of $Q(C,f)$.
Equivalently, $Q(C,f)$ is the largest quotient complex of $C_*$ to which $f$ descends
as a chain map.
\end{proposition}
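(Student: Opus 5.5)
I will prove the four assertions in order: the equality \eqref{eq:defect-equivalent}, the minimality characterization, the descent of $f$, and finally the universality of the quotient. Throughout, write $\Delta=\Delta(C,f)$ and $\Delta'=\sum_{\ell\geq1}\im[d,f^\ell]$.

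For \eqref{eq:defect-equivalent}, first take a generator of $\Delta$, which has the form $f^j[d,f]x$. Rewrite \eqref{eq:f-stability} as $f[d,f^\ell]=[d,f^{\ell+1}]-[d,f]f^\ell$, and use it to show by induction on $j$ that $f^j(\im[d,f])\subseteq\Delta'$. The inductive hypothesis will be that $f^{j}\im[d,f^\ell]\subseteq\Delta'$ for all $\ell$. Applying $f$ once more sends $\im[d,f^\ell]$ into $\im[d,f^{\ell+1}]+\im[d,f]$. This shows that $\Delta'$ is $f$-stable, and hence $\Delta\subseteq\Delta'$. For the reverse inclusion I use \eqref{eq:commutator-power}: it writes $[d,f^\ell]x$ as $\sum_j f^j[d,f](f^{\ell-1-j}x)$, which lies in $\sum_j f^j(\im[d,f])=\Delta$.

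Next comes the subcomplex and minimality claim. $\Delta$ is $f$-stable directly from its definition as $\sum_j f^j(\im[d,f])$. It is $d$-stable because each $\im[d,f^\ell]$ is a subcomplex by Lemma~\ref{lem:identities}, via \eqref{eq:d-commutator}, and a sum of subcomplexes is a subcomplex; here the description $\Delta'$ is exactly what makes this work. Conversely, let $S$ be any $f$-stable subcomplex containing $\im[d,f]$. Then $S$ contains every $f^j(\im[d,f])$, so $S\supseteq\Delta$.

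Since $\Delta$ is a subcomplex stable under $f$, the quotient $Q$ is a complex and $f$ induces $\widetilde f$ on it. On $Q$, the commutator is $[\bar d,\widetilde f]\bar x=\overline{[d,f]x}=0$, so $\widetilde f$ is a chain map.

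For the last assertion, suppose $C_*\to C_*/S$ is a quotient complex, so $S$ is a subcomplex, and suppose $f$ descends to a chain map on it. Descent means exactly that $f(S)\subseteq S$. Being a chain map on the quotient means $[d,f]C\subseteq S$. By minimality, $\Delta\subseteq S$, so $C/S$ is a quotient of $Q$. The only mildly delicate step is the induction giving $f$-stability of $\Delta'$; the rest is routine bookkeeping. Degree-zero maps carry no Koszul sign issues here.
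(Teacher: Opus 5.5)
Your proof is correct and follows the paper's argument essentially step for step: \eqref{eq:commutator-power} gives $\sum_{\ell\geq1}\im[d,f^\ell]\subseteq\Delta(C,f)$; \eqref{eq:f-stability} makes that sum $f$-stable, which gives the reverse inclusion; \eqref{eq:d-commutator} gives the subcomplex property; and minimality follows directly from the definition. Two small differences from the paper: your induction on $j$ is unnecessary once you have $f\Delta'\subseteq\Delta'$, and you explicitly check the ``largest quotient'' clause (descent means $f(S)\subseteq S$, the chain-map condition means $[d,f]C_*\subseteq S$), which the paper leaves implicit.
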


\begin{proof}
Put $V=\im[d,f]$ and $B=\sum_{\ell\geq1}\im[d,f^\ell]$.  By
\eqref{eq:commutator-power}, $B\subseteq\sum_{j\geq0}f^jV=\Delta(C,f)$.  Conversely,
\eqref{eq:f-stability} shows that $fB\subseteq B$, and $V\subseteq B$; hence
$\Delta(C,f)\subseteq B$.  This proves \eqref{eq:defect-equivalent}.  Each summand in
$B$ is a subcomplex by \eqref{eq:d-commutator}, so $\Delta(C,f)$ is a subcomplex.  Its
definition makes it the smallest $f$-stable subspace containing $V$, hence the smallest
such subcomplex.  Finally $[d,f]C_*\subseteq\Delta(C,f)$, so the induced operators on
$Q(C,f)$ commute.
\end{proof}

\begin{theorem}[Universal uncurving and reflection]\label{thm:reflection}
For every $X=(C_*,f)\in\Pair$, the quotient map
\[
  \eta_X=q_X\colon X\longrightarrow (Q(C,f),\widetilde f)
\]
is universal among morphisms from $X$ to uncurved pairs.  Consequently
\[
  F\colon\Pair\longrightarrow\Unc,
  \qquad
  F(C_*,f)=(Q(C,f),\widetilde f),
\]
is left adjoint to the inclusion $I\colon\Unc\hookrightarrow\Pair$.  The unit is
$\eta=q$ and the counit is the identity.
\end{theorem}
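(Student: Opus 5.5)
The plan is to check the universal property directly, using Proposition~\ref{prop:defect}, and then read off the adjunction formally.

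\textbf{Step 1: $\eta_X$ is a morphism into $\Unc$.} By Proposition~\ref{prop:defect}, $\Delta(C,f)$ is a subcomplex and $f$-stable. Hence $Q(C,f)$ is a quotient complex, $\widetilde f$ is well defined, and $\widetilde f$ is a chain map, so $(Q(C,f),\widetilde f)\in\Unc$. The quotient map $q_X$ is a chain map with $q_Xf=\widetilde f q_X$ by construction, so it is a morphism in $\Pair$.

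\textbf{Step 2: universality.} Let $\phi\colon(C_*,f)\to(D_*,g)$ be a morphism with $g$ a chain map. It suffices to show $\Delta(C,f)\subseteq\ker\phi$. Since $\phi$ is a chain map and intertwines $f$ with $g$,
\[
  \phi[d,f]=d\phi f-\phi fd=dg\phi-g d\phi=[d,g]\phi=0.
\]
Thus $\im[d,f]\subseteq\ker\phi$. Moreover $\ker\phi$ is $f$-stable, because $\phi f=g\phi$, and it is a subcomplex, because $\phi$ is a chain map. By the minimality in Proposition~\ref{prop:defect}, $\Delta(C,f)\subseteq\ker\phi$. Alternatively, one can apply $\phi f^j=g^j\phi$ to each summand of \eqref{eq:defect-span}. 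Therefore $\phi$ factors uniquely through the surjection $q_X$ as $\phi=\bar\phi q_X$. The induced map $\bar\phi$ is a chain map. It also satisfies $\bar\phi\widetilde f=g\bar\phi$, which one checks on representatives using that $q_X$ is surjective. Uniqueness follows from the surjectivity of $q_X$.

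\textbf{Step 3: adjunction.} By the standard pointwise criterion for adjoints, universal arrows $\eta_X$ from each $X$ to $I$ assemble into a functor $F$ left adjoint to $I$ with unit $\eta$. On a morphism $\psi\colon X\to Y$, $F\psi$ is defined as the unique factorization of $\eta_Y\psi$ through $\eta_X$. For the counit, take $Y=(D,g)\in\Unc$. Then $[d,g]=0$, so $\Delta(D,g)=0$. Hence $Q(D,g)=D$, $\widetilde g=g$, and $\eta_{IY}$ is the identity. The counit $FIY\to Y$ is the map corresponding to $\id_{IY}$, which is therefore the identity.

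There is no real obstacle here. The only point needing care is that $\ker\phi$ is both a subcomplex and $f$-stable, so that the minimality statement of Proposition~\ref{prop:defect} applies. Equality $F I=\id$ holds on the nose rather than merely up to isomorphism, since $\Delta$ is literally zero for uncurved pairs.
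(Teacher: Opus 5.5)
Your proposal is correct and follows essentially the same route as the paper: you show $\phi[d,f]=[d,g]\phi=0$, use that $\ker\phi$ is an $f$-stable subcomplex so the minimality in Proposition~\ref{prop:defect} gives $\Delta(C,f)\subseteq\ker\phi$, factor through $q_X$, and read off the adjunction, with the counit the identity because $\Delta=0$ for uncurved pairs. Your additional checks (Step~1, and that $\bar\phi$ is an equivariant chain map) are details the paper leaves implicit.
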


\begin{proof}
Let $\psi\colon(C_*,f)\to(D_*,g)$ be a morphism with $(D_*,g)$ uncurved.  Since $\psi$
is a chain map and $\psi f=g\psi$,
\[
  \psi[d,f]=[d,g]\psi=0.
\]
The kernel of $\psi$ is an $f$-stable subcomplex, so Proposition~\ref{prop:defect}
implies $\Delta(C,f)\subseteq\ker\psi$.  Hence $\psi$ factors uniquely through $q_X$.
This is the universal property.  Applied to $q_Y\phi$ for a morphism $\phi\colon X\to Y$,
it defines a unique morphism $F(\phi)\colon F(X)\to F(Y)$; uniqueness gives functoriality.
If $X$ is already uncurved, then $\Delta(X)=0$ and $q_X=\id_X$, so $FI=\id_{\Unc}$ and
the counit is the identity.  Thus $F\dashv I$.
\end{proof}

\begin{remark}\label{rem:reflection-meaning}
The reflector is idempotent and $q_X$ is an isomorphism exactly when $X$ is uncurved.
The kernel of $q_X$ is the entire obstruction that must be removed in any quotient on
which the endomorphism becomes a chain map.  The next section iterates precisely this
kernel.
\end{remark}

\section{The Kernel Tower of the Reflector}\label{sec:tower}

Let $\eta\colon\id_{\Pair}\to IF$ be the unit of the reflection of
Theorem~\ref{thm:reflection}.

\begin{definition}[Reflector kernel and canonical tower]\label{def:kernel-tower}
Define an endofunctor $K\colon\Pair\to\Pair$ by
\[
  KX:=\ker(\eta_X).
\]
For $X=(C_*,f)$, Proposition~\ref{prop:defect} identifies
\[
  KX=(\Delta(C,f),f|_{\Delta(C,f)}).
\]
The \emph{canonical uncurving filtration} of $X$ is
\[
  G^kX:=K^kX,
  \qquad k\geq0.
\]
\end{definition}

Naturality of $\eta$ makes $K$ a functor: a morphism $\phi\colon X\to Y$ restricts to
$K\phi\colon KX\to KY$.

\begin{proposition}[Canonical uncurved layers]\label{prop:canonical-layers}
For every $k\geq0$ there is a natural short exact sequence of pairs
\begin{equation}\label{eq:kernel-layer}
  0\longrightarrow G^{k+1}X
   \longrightarrow G^kX
   \xrightarrow{\eta_{G^kX}} IF(G^kX)
   \longrightarrow0.
\end{equation}
Consequently
\begin{equation}\label{eq:layer-reflector}
  \gr_G^kX:=G^kX/G^{k+1}X\cong IF(G^kX),
\end{equation}
so every associated-graded layer is uncurved.
\end{proposition}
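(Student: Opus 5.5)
The plan is to apply the definition of $K$ to the object $Y:=G^kX$, which is itself an object of $\Pair$, since $\Pair$ is closed under passing to $f$-stable subcomplexes. Write $G^kX=(C^{(k)}_*,f_k)$, where $C^{(k)}_*\subseteq C_*$ is an $f$-stable subcomplex and $f_k$ is the restriction of $f$. Such a subcomplex is again bounded, and $f_k$ is a degree-zero graded endomorphism, so $Y\in\Pair$ and $G^{k+1}X=KY$.

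By Definition~\ref{def:kernel-tower}, $KY=\ker(\eta_Y)=(\Delta(C^{(k)},f_k),f_k|)$. Theorem~\ref{thm:reflection} gives $\eta_Y=q_Y\colon C^{(k)}_*\to Q(C^{(k)},f_k)$, which is the quotient map and hence surjective. This yields the exact sequence of chain complexes
\[
0\to\Delta(C^{(k)},f_k)\to C^{(k)}_*\xrightarrow{q_Y}Q(C^{(k)},f_k)\to0.
\]
Each map intertwines the endomorphisms. The inclusion does so because $\Delta$ is $f_k$-stable by Proposition~\ref{prop:defect}. The quotient map does so because $\widetilde f_k$ is induced from $f_k$. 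Therefore this is a short exact sequence in $\Pair$, namely \eqref{eq:kernel-layer}.

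For naturality, let $\phi\colon X\to X'$ be a morphism. By induction, using functoriality of $K$, it restricts to $G^k\phi\colon G^kX\to G^kX'$. The left square commutes because $G^{k+1}\phi=K(G^k\phi)$ is by definition a restriction of $G^k\phi$. The right square commutes by naturality of the unit $\eta$ applied to $G^k\phi$, which Theorem~\ref{thm:reflection} supplies since $\eta$ is the unit of the adjunction $F\dashv I$.

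Finally, the first isomorphism theorem identifies $G^kX/G^{k+1}X$ with $Q(C^{(k)},f_k)$, compatibly with the induced endomorphisms. This gives \eqref{eq:layer-reflector}. Uncurvedness of each layer then follows from Proposition~\ref{prop:defect}, which says $\widetilde f_k$ is a chain map.

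I do not expect a substantive obstacle here. The only point needing care is checking that $G^kX$ genuinely lies in $\Pair$, so that the unit $\eta_{G^kX}$ is defined, and that the quotient pair carries the induced endomorphism. Both facts follow directly from Proposition~\ref{prop:defect}.
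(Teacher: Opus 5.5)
Your proposal is correct and follows the paper's own argument: $G^{k+1}X=K(G^kX)=\ker\eta_{G^kX}$, and $\eta_{G^kX}$ is the surjective quotient map onto $IF(G^kX)$, which gives both the exact sequence and the identification of the layer. Beyond that you only make explicit what the paper's two-line proof leaves implicit: that $G^kX$ lies in $\Pair$, and that naturality comes from the functoriality of $K$ together with naturality of the unit $\eta$.
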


\begin{proof}
By definition $G^{k+1}X=K(G^kX)=\ker\eta_{G^kX}$, while the unit is the quotient map
onto $IF(G^kX)$.  This is exactly \eqref{eq:kernel-layer}, and
\eqref{eq:layer-reflector} follows.
\end{proof}

\begin{lemma}[Finite termination]\label{lem:termination}
If $C_*$ is bounded, the canonical tower terminates after at most the number of degrees
$n$ for which $C_n\neq0$.
\end{lemma}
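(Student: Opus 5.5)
The plan is to show that each application of $K$ strictly shrinks the set of degrees in which the complex is nonzero. For a pair $X=(C_*,f)$, let
\[
  S(X):=\{n : C_n\neq0\},
\]
which is finite because $C_*$ is bounded. I will show that if $X\neq0$ then $S(KX)\subsetneq S(X)$, and more precisely that
\[
  S(KX)\subseteq S(X)\setminus\{\max S(X)\}.
\]
Iterating this gives $|S(G^kX)|\leq\max(|S(X)|-k,0)$. Hence $G^kX=0$ once $k=|S(X)|$, and then $G^jX=K^{j-k}(0)=0$ for all $j\geq k$. This is the claimed termination bound.

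For the key step, recall from Definition~\ref{def:kernel-tower} and Proposition~\ref{prop:defect} that $KX=(\Delta(C,f),f|_{\Delta(C,f)})$. Since $\Delta(C,f)\subseteq C_*$ degreewise, we get $S(KX)\subseteq S(X)$ immediately. It remains to see that the top degree $b=\max S(X)$ is killed.

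Use the description \eqref{eq:defect-equivalent}, $\Delta(C,f)=\sum_{\ell\geq1}\im[d,f^\ell]$. Because $f$ has degree $0$ and $d$ has degree $-1$, each $[d,f^\ell]$ is homogeneous of degree $-1$. So the degree-$n$ component of $\im[d,f^\ell]$ is $[d,f^\ell](C_{n+1})$. For $n=b$ this is $[d,f^\ell](C_{b+1})=0$, since $C_{b+1}=0$. Hence $\Delta(C,f)_b=0$, and $b\notin S(KX)$.

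The same argument also gives the sharper statement $\Delta_n=0$ whenever $C_{n+1}=0$. I only need the top-degree case.

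The only point needing care is that the Koszul sign convention does not change degrees: $[d,f]=df-fd$ is still a degree $-1$ map. The subsequent steps are bookkeeping.

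I will also remark that the bound is consistent with Proposition~\ref{prop:canonical-layers}. A pair concentrated in a single degree has $d=0$, so it is uncurved and its $K$ vanishes. This matches the final step of the count.
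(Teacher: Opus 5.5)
Your proposal is correct and follows the paper's own argument: since $[d,f]$ has degree $-1$, the defect of $G^kX$ vanishes in the top nonzero degree of $G^kC$, and iterating this kills the complex. The differences are cosmetic. You use \eqref{eq:defect-equivalent} where the paper uses \eqref{eq:defect-span}. You also track the full support $S(G^kX)$, which makes explicit why the bound is the number of nonzero degrees rather than the length of the degree range.
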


\begin{proof}
If $n$ is the largest degree with $(G^kC)_n\neq0$, then
$[d,f]\colon(G^kC)_{n+1}\to(G^kC)_n$ has zero source.  Hence
$\Delta(G^kC,f|_{G^kC})_n=0$ by \eqref{eq:defect-span}, so $G^{k+1}C$ has strictly
smaller top nonzero degree whenever $G^kC\neq0$.  Boundedness therefore forces
termination within the stated number of steps.
\end{proof}

\begin{definition}\label{def:uncurving}
An \emph{uncurving filtration} of $X=(C_*,f)$ is a finite decreasing filtration
\[
  C_*=H^0\supseteq H^1\supseteq\cdots\supseteq H^m=0
\]
by $f$-stable subcomplexes such that each quotient $H^k/H^{k+1}$ is uncurved.
Equivalently,
\[
  [d,f]H^k\subseteq H^{k+1}
  \qquad\text{for every }k.
\]
\end{definition}

\begin{proposition}[Minimality]\label{prop:minimal}
Let $G^\bullet$ be the canonical kernel tower and let $H^\bullet$ be any uncurving
filtration of $X$.  Then
\[
  G^kX\subseteq H^k
  \qquad\text{for every }k.
\]
Thus the canonical tower is the smallest uncurving filtration term by term.
\end{proposition}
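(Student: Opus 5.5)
We argue by induction on $k$, using the minimality of the defect subcomplex from Proposition~\ref{prop:defect}. For $k=0$ both sides are $C_*$, so there is nothing to prove. Now assume $G^kX\subseteq H^k$, and write $G^k=G^kX$ with restricted endomorphism $f_k=f|_{G^k}$. By Definition~\ref{def:kernel-tower} and Proposition~\ref{prop:defect},
\[
  G^{k+1}X=\Delta(G^k,f_k)=\sum_{j\geq0}f^j\bigl([d,f]G^k\bigr),
\]
which is the smallest $f$-stable subcomplex of $G^k$ containing $[d,f]G^k$. It therefore suffices to show that $[d,f]G^k\subseteq H^{k+1}\cap G^k$ and that $H^{k+1}\cap G^k$ is an $f$-stable subcomplex of $G^k$.

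For the first inclusion, the inductive hypothesis and the equivalent form of the uncurving condition in Definition~\ref{def:uncurving} give
\[
  [d,f]G^k\subseteq[d,f]H^k\subseteq H^{k+1}.
\]
Moreover $[d,f]G^k\subseteq G^k$, because $G^k$ is a subcomplex and is $f$-stable. For the second claim, $H^{k+1}$ is an $f$-stable subcomplex by the definition of an uncurving filtration, and so is $G^k$; an intersection of two $f$-stable subcomplexes is again one. Minimality in Proposition~\ref{prop:defect}, applied to the pair $(G^k,f_k)$, then yields
\[
  G^{k+1}X\subseteq H^{k+1}\cap G^k\subseteq H^{k+1}.
\]

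The only point needing care is the equivalence stated in Definition~\ref{def:uncurving}: that the quotient $H^k/H^{k+1}$ is uncurved exactly when $[d,f]H^k\subseteq H^{k+1}$. This holds because the commutator on the quotient is induced by $[d,f]$ restricted to $H^k$, and that induced map vanishes precisely when its image lands in $H^{k+1}$. Once this is granted, the induction closes.

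Finally, $H^m=0$ forces $G^mX=0$. So the kernel tower is itself an uncurving filtration: its layers are uncurved by Proposition~\ref{prop:canonical-layers}, and it is finite by Lemma~\ref{lem:termination}. Together with the inclusions $G^kX\subseteq H^k$, this gives term-by-term minimality.
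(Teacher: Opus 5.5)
Your proof is correct and is essentially the paper's argument. Both run an induction on $k$ and trap $G^{k+1}X$ inside the $f$-stable subcomplex $G^kX\cap H^{k+1}$. The paper does this by applying the universal property of $\eta_{G^kX}$ to the composite $G^kX\to H^k\to H^k/H^{k+1}$, whose kernel is $G^kX\cap H^{k+1}$. You instead invoke directly the minimality of $\Delta$ from Proposition~\ref{prop:defect}, which is exactly the fact that underlies that universal property.
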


\begin{proof}
The assertion is clear for $k=0$.  Suppose $G^kX\subseteq H^k$.  The composite
\[
  G^kX\longrightarrow H^k\longrightarrow H^k/H^{k+1}
\]
is a morphism to an uncurved pair.  By the universal property of
$\eta_{G^kX}\colon G^kX\to IF(G^kX)$, its kernel contains
$K(G^kX)=G^{k+1}X$.  Since the kernel of the composite is
$G^kX\cap H^{k+1}$, we obtain $G^{k+1}X\subseteq H^{k+1}$.
\end{proof}

\begin{corollary}[Functoriality]\label{cor:tower-functorial}
A morphism $\phi\colon X\to Y$ in $\Pair$ induces natural morphisms
\[
  G^k(\phi)=K^k(\phi)\colon G^kX\longrightarrow G^kY
\]
and hence morphisms of uncurved pairs
\[
  \gr_G^k(\phi)\colon\gr_G^kX\longrightarrow\gr_G^kY.
\]
\end{corollary}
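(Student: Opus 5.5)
The plan is to argue by induction on $k$, using that $K$ is a functor. For $k=0$ we have $G^0X=X$, so $G^0(\phi)=\phi$.

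For the inductive step, suppose $G^k(\phi)=K^k(\phi)\colon G^kX\to G^kY$ has been constructed as a morphism in $\Pair$. Apply the remark after Definition~\ref{def:kernel-tower} to it. Concretely, $\phi$ is a chain map with $\phi f=g\phi$, so $\phi[d,f]=[d,g]\phi$ and $\phi f^j=g^j\phi$. It follows that $\phi(\Delta(C,f))\subseteq\Delta(D,g)$. Equivalently, naturality of the unit, $\eta_Y\phi=IF(\phi)\eta_X$ from Theorem~\ref{thm:reflection}, sends $\ker\eta_X$ into $\ker\eta_Y$. Either way, $\phi$ restricts to a morphism $K\phi\colon KX\to KY$, and this restriction again commutes with the endomorphisms and with $d$. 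Applying this to $G^k(\phi)$ defines $G^{k+1}(\phi)=K^{k+1}(\phi)$.

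Naturality means that $G^{k+1}(\phi)$ is the restriction of $G^k(\phi)$ to $G^{k+1}X$, and that $G^k(\psi\phi)=G^k(\psi)G^k(\phi)$ and $G^k(\id)=\id$. The first holds by construction. The functor identities follow because the maps are literal restrictions of $\phi$ to subcomplexes.

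For the graded layers, $G^k(\phi)$ maps $G^{k+1}X$ into $G^{k+1}Y$, so it induces a chain map $\gr_G^kX\to\gr_G^kY$ intertwining the induced endomorphisms. Under the identification \eqref{eq:layer-reflector}, this map is $IF(G^k(\phi))$, because $F$ is a functor and the sequence \eqref{eq:kernel-layer} is natural: the unit square for $G^k(\phi)$ commutes. By Proposition~\ref{prop:canonical-layers} both layers are uncurved, so $\gr_G^k(\phi)$ is a morphism in $\Unc$.

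There is no real obstacle here. The only point requiring care is checking that the restricted map still commutes with $f$ and $g$ on the subcomplexes, which is immediate.
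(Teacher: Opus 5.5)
Your proposal is correct and follows essentially the same route as the paper. The paper simply cites functoriality of $K$ (from naturality of the unit $\eta$) and the naturality of the exact sequences \eqref{eq:kernel-layer}; your induction on $k$ and your explicit check that $\phi(\Delta(C,f))\subseteq\Delta(D,g)$ spell out those same two ingredients in more detail.
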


\begin{proof}
This is immediate from functoriality of $K$ and the natural exact sequences
\eqref{eq:kernel-layer}.
\end{proof}

\begin{remark}[Uncurving depth]\label{rem:depth}
The least $m$ with $G^mX=0$ is the \emph{uncurving depth} of $X$.  It is at most one
exactly when $X$ is uncurved.  It is at most two exactly when the restriction of $f$ to
$\Delta(C,f)$ is a chain map.
\end{remark}

\section{The Rees Deformation}\label{sec:rees}

Let $H^\bullet$ be an uncurving filtration of $(C_*,f)$, extended by $H^k=C_*$ for
$k<0$ and by $H^k=0$ beyond its final term, and let $u$ be an indeterminate of degree
zero.  Define
\[
  R_H(C):=\bigoplus_{k\in\mathbb Z}H^k u^{-k}
          \subseteq C_*[u,u^{-1}].
\]

\begin{proposition}[Rees uncurving]\label{prop:rees}
For every bounded pair $(C_*,f)$ equipped with an uncurving filtration $H^\bullet$,
the Rees module $R_H(C)$ is a bounded complex of flat $\K[u]$-modules, and $f$ acts on
it $\K[u]$-linearly with
\[
  [d,f]R_H(C)\subseteq uR_H(C).
\]
There are natural isomorphisms of pairs
\begin{equation}\label{eq:rees-fibers}
  R_H(C)/(u-1)R_H(C)\cong(C_*,f),
  \qquad
  R_H(C)/uR_H(C)\cong(\gr_H C_*,\gr_H f).
\end{equation}
In particular the special fiber at $u=0$ is uncurved.  If, moreover, every $C_n$ is
finite-dimensional, then each $R_H(C)_n$ is a finitely generated free $\K[u]$-module
of rank $\dim_\K C_n$.
\end{proposition}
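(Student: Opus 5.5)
The plan is to work degree by degree in the homological grading and treat $R_H(C)$ as a $\K[u]$-submodule of $C_*[u,u^{-1}]$. First I check the structure. Since each $H^k$ is a subcomplex and $f$-stable, $d$ and $f$ (extended $u$-linearly) preserve $R_H(C)$. Multiplication by $u$ sends $H^ku^{-k}$ to $H^ku^{-(k-1)}\subseteq H^{k-1}u^{-(k-1)}$ because the filtration is decreasing, so $R_H(C)$ is a $\K[u]$-submodule and $d,f$ act $\K[u]$-linearly. For the curvature condition, take $x\in H^k$. By Definition~\ref{def:uncurving} we have $[d,f]x\in H^{k+1}$, so
\[
  [d,f](xu^{-k})=\bigl([d,f]x\bigr)u^{-(k+1)}\cdot u\in uR_H(C).
\]
Boundedness in homological degree is inherited from $C_*$.

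Next I prove flatness. Each $R_H(C)_n$ is a submodule of the torsion-free module $C_n[u,u^{-1}]$, so it is torsion-free. Over the PID $\K[u]$, torsion-free modules are flat. For the finite-dimensional statement I choose a basis of $C_n$ adapted to the finite filtration $H^\bullet_n$: take $e_i$ with $e_i\in H^{k_i}_n\setminus H^{k_i+1}_n$ (using $H^0=C$ and $H^m=0$). I claim $\{e_iu^{-k_i}\}$ is a free $\K[u]$-basis of $R_H(C)_n$. It generates, since $H^k_n$ is spanned by the $e_i$ with $k_i\geq k$, and $e_iu^{-k}=u^{k_i-k}\,e_iu^{-k_i}$. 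It is independent, since it is part of a $\K[u,u^{-1}]$-basis of $C_n[u,u^{-1}]$. This gives rank $\dim_\K C_n$.

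For the fibers, at $u=1$ I use the evaluation map $R_H(C)\to C_*$, $\sum x_ku^{-k}\mapsto\sum x_k$. It is surjective because $H^0=C$, and it commutes with $d$ and $f$. The step I expect to be the main obstacle is showing that its kernel is exactly $(u-1)R_H(C)$. I would handle it by first restricting to a single degree, where the filtration is finite and the adapted basis above is available if needed. In general I would argue directly. If $\sum_k x_k=0$ with $x_k\in H^k$ (a finite sum), rewrite the element as a telescoping combination: set $y_k=\sum_{j\geq k}x_j\in H^k$, where membership holds because the filtration is decreasing and $y_k=-\sum_{j<k}x_j$ vanishes for $k$ small. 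Then $\sum x_ku^{-k}=(u-1)\cdot\bigl(\text{something built from } y_ku^{-k}\bigr)$, and the sign and shift need to be checked. At $u=0$ I check that $uR_H(C)=\bigoplus_k H^{k+1}u^{-k}$, so $R_H(C)/uR_H(C)\cong\bigoplus_k H^k/H^{k+1}=\gr_H C_*$, compatibly with $d$ and $f$. Both isomorphisms are natural in filtered morphisms because they are induced by restriction of $u$-linear extensions.

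Finally, the special fiber is uncurved. This follows from $[d,f]R_H(C)\subseteq uR_H(C)$, or equally directly from Definition~\ref{def:uncurving}, since each $H^k/H^{k+1}$ is uncurved.
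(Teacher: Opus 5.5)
Your proposal is correct and follows the paper's argument essentially step by step: $\K[u]$-linearity because $d$ and $f$ preserve each $H^k$, the shift $H^{k+1}u^{-k}=u\,H^{k+1}u^{-(k+1)}$ for the curvature bound, torsion-freeness over a PID for flatness, and $uR_H(C)=\bigoplus_k H^{k+1}u^{-k}$ for the special fiber; your explicit adapted basis $\{e_iu^{-k_i}\}$ simply makes concrete the paper's ``finitely generated torsion-free hence free, rank by localization'' step. The telescoping step you left open closes with $\sum_k x_ku^{-k}=(u-1)\bigl(-\sum_k y_ku^{-k}\bigr)$, since $y_k-y_{k+1}=x_k$ and $y_k=0$ for $|k|$ large, and this supplies the kernel computation at $u=1$ that the paper only asserts.
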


\begin{proof}
Since $d$ and $f$ preserve every $H^k$, they act $\K[u]$-linearly on $R_H(C)$.  The
uncurving condition gives
\[
  [d,f](H^ku^{-k})\subseteq H^{k+1}u^{-k}
   =u\,H^{k+1}u^{-(k+1)}\subseteq uR_H(C).
\]
For every $n$, the module $R_H(C)_n$ is a $\K[u]$-submodule of
$C_n[u,u^{-1}]$ and hence is torsion-free.  Since $\K[u]$ is a PID, every torsion-free
$\K[u]$-module is flat.  The quotient by $u$ is naturally
$\bigoplus_kH^k/H^{k+1}$, while $xu^{-k}\mapsto x$ gives the specialization at $u=1$
with kernel $(u-1)R_H(C)$.  If $C_n$ is finite-dimensional, the finite filtration
$H^\bullet C_n$ admits an adapted basis; the corresponding homogeneous generators show
that $R_H(C)_n$ is finitely generated.  Being finitely generated and torsion-free over
the PID $\K[u]$, it is free, and localization at $u$ identifies it with
$C_n[u,u^{-1}]$, so its rank is $\dim_\K C_n$.
\end{proof}

\begin{corollary}[Canonical Rees interpolation]\label{cor:rees-functorial}
For the canonical kernel tower $G^kX=K^kX$, the Rees construction is functorial in
$X\in\Pair$.  Moreover its special fiber is
\[
  R_G(C)/uR_G(C)
  \cong\bigoplus_k IF(K^kX).
\]
\end{corollary}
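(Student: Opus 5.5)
The plan is to obtain both assertions from Proposition~\ref{prop:rees} applied to $H^\bullet=G^\bullet X$, together with the functoriality of $K$. First, the canonical tower must actually be an uncurving filtration in the sense of Definition~\ref{def:uncurving}. By Lemma~\ref{lem:termination} it is finite for bounded $C_*$. Each $G^kX$ is an $f$-stable subcomplex, by induction using Proposition~\ref{prop:defect} applied to $(G^{k-1}C,f|)$. Each quotient is uncurved by Proposition~\ref{prop:canonical-layers}. Hence $R_G(C)$ is defined, and Proposition~\ref{prop:rees} gives the special fiber $R_G(C)/uR_G(C)\cong\bigoplus_k\gr_G^kX$. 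Composing with the natural isomorphisms \eqref{eq:layer-reflector}, $\gr_G^kX\cong IF(K^kX)$, yields the displayed identification.

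For functoriality, I would take a morphism $\phi\colon X\to Y$ in $\Pair$ with $Y=(D_*,g)$. By Corollary~\ref{cor:tower-functorial}, $\phi(G^kX)\subseteq G^kY$ for every $k$: the map $K^k\phi$ is just the restriction of $\phi$. For $k<0$ the extended terms are $C_*$ and $D_*$, and beyond termination they are $0$, so the inclusion $\phi(G^kX)\subseteq G^kY$ holds for all $k\in\mathbb Z$. Thus $\phi\otimes\id\colon C_*[u,u^{-1}]\to D_*[u,u^{-1}]$ restricts to a $\K[u]$-linear map
\[
  R_G(\phi)\colon R_G(C)\to R_G(D),
  \qquad xu^{-k}\mapsto\phi(x)u^{-k}.
\]
This map is a chain map and intertwines $f$ and $g$, since $\phi$ does so on each $G^kX$. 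Identities and composites are preserved because the map is a restriction of $\phi\otimes\id$. So we get a functor from $\Pair$ to pairs over $\K[u]$.

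The only point needing care is that the fiber isomorphisms are natural. At $u=1$, the evaluation $xu^{-k}\mapsto x$ visibly commutes with $\phi$. At $u=0$, the map induced by $R_G(\phi)$ on the $k$-th summand is the map $\gr_G^k(\phi)$ of Corollary~\ref{cor:tower-functorial}. Under \eqref{eq:layer-reflector} this map corresponds to $IF(K^k\phi)$, by naturality of the unit $\eta$ applied to $K^k\phi$. I expect this compatibility check to be the main (though mild) obstacle, because it requires matching the quotient description of $\gr_G^k$ with the reflector description. The rest is bookkeeping.
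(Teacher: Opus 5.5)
Your proposal is correct and follows essentially the paper's argument. The Rees morphism $xu^{-k}\mapsto\phi(x)u^{-k}$ comes from $\phi(G^kX)\subseteq G^kY$ (Corollary~\ref{cor:tower-functorial}), and the special fiber comes from Proposition~\ref{prop:rees} combined with the layer isomorphisms \eqref{eq:layer-reflector}. You also check two points the paper leaves implicit, namely that the kernel tower is a genuine uncurving filtration and that the fiber identifications are natural by naturality of $\eta$, and both go through as you describe.
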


\begin{proof}
Corollary~\ref{cor:tower-functorial} gives
$\phi(G^kX)\subseteq G^kY$ for every morphism $\phi\colon X\to Y$, so
$xu^{-k}\mapsto\phi(x)u^{-k}$ defines the Rees morphism.  The special-fiber description
is Proposition~\ref{prop:canonical-layers}.
\end{proof}

\begin{remark}\label{rem:rees-meaning}
The Rees family is therefore not attached to an arbitrary auxiliary filtration when the
canonical tower is used: it is functorially determined by the unit of the reflection.
The reflector gives the first uncurved quotient, its kernel tower resolves the remaining
obstruction layer by layer, and the Rees module assembles those layers into a flat
one-parameter degeneration.
\end{remark}

\section{Trace Consequences}\label{sec:trace}

From here through the zeta and torsion examples we assume that all chain groups are
finite-dimensional.

\begin{lemma}[Trace additivity]\label{lem:trace-additivity}
Let $0\to U\to V\to W\to0$ be an equivariant short exact sequence of finite-dimensional
vector spaces.  Then $\tr(V)=\tr(U)+\tr(W)$ for the corresponding endomorphisms.
\end{lemma}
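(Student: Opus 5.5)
We prove the identity by choosing a basis of $V$ adapted to the subspace $U$ and reading off the block structure of the endomorphism. Write $\alpha\colon U\to V$ and $\beta\colon V\to W$ for the maps in the sequence, and $a,v,w$ for the endomorphisms of $U,V,W$. Equivariance means $v\alpha=\alpha a$ and $\beta v=w\beta$.

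First, we identify $U$ with the $v$-stable subspace $\alpha(U)\subseteq V$. Since $\alpha$ is injective and $v\alpha=\alpha a$, the restriction $v|_{\alpha(U)}$ is conjugate to $a$ via $\alpha$, so $\tr(v,\alpha(U))=\tr(a)$. Next, exactness identifies $W$ with $V/\alpha(U)$ through the isomorphism $\bar\beta$ induced by $\beta$. The relation $\beta v=w\beta$ shows that $\bar\beta$ intertwines the induced endomorphism $\bar v$ of the quotient with $w$, so $\tr(\bar v)=\tr(w)$.

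Now choose a basis $e_1,\dots,e_p$ of $\alpha(U)$ and extend it to a basis $e_1,\dots,e_{p+q}$ of $V$. Because $\alpha(U)$ is $v$-stable, the matrix of $v$ in this basis is block upper triangular:
\[
\begin{pmatrix} A & \ast\\ 0 & B\end{pmatrix}.
\]
Here $A$ is the matrix of $v|_{\alpha(U)}$. The block $B$ is the matrix of $\bar v$ in the basis given by the images of $e_{p+1},\dots,e_{p+q}$ in $V/\alpha(U)$. The trace of a block upper triangular matrix is the sum of the traces of its diagonal blocks, so
\[
\tr(v)=\tr(A)+\tr(B)=\tr(a)+\tr(w).
\]

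The only point that needs care is identifying $B$ with the matrix of the quotient endomorphism. This holds because $\bar v(\bar e_j)$ is the class of $v(e_j)$, and the coefficients of $v(e_j)$ on $e_1,\dots,e_p$ vanish in the quotient. Everything else is linear algebra.

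For the later application to graded pieces, we apply this degreewise to each $C_n$ and take the alternating sum to obtain additivity of supertraces.
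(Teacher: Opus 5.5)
Your proof is correct and is the same argument as the paper's: choose a basis of $V$ adapted to $U$ and read the trace off the block upper triangular matrix. You additionally spell out why the diagonal blocks represent the endomorphisms of $U$ and $W$, which the paper leaves implicit.
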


\begin{proof}
Choose a basis of $V$ adapted to $U$; the representing matrix is block upper triangular.
\end{proof}

\begin{theorem}[Homological representation]\label{thm:homological-representation}
For every uncurving filtration $H^\bullet$ of $(C_*,f)$,
\begin{equation}\label{eq:homological-representation}
  \str_{C_*}(f)
  =\sum_{k\geq0}
   L\bigl(H_*(\gr_H^k f),H_*(\gr_H^k C_*)\bigr).
\end{equation}
\end{theorem}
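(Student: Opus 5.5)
The plan is to prove \eqref{eq:homological-representation} in two moves: first additivity of the supertrace along the filtration, then the Hopf trace formula on each uncurved layer. Here $\str_{C_*}(f)=\sum_n(-1)^n\tr(f_n,C_n)$, and every space involved is finite-dimensional by the standing assumption of this section.

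\emph{Step 1 (additivity along the filtration).} Fix a degree $n$. The filtration $H^\bullet$ is finite and consists of $f$-stable subcomplexes, so in each degree $0\to H^{k+1}_n\to H^k_n\to(\gr_H^kC)_n\to0$ is an $f$-equivariant short exact sequence of finite-dimensional spaces. Applying Lemma~\ref{lem:trace-additivity} repeatedly, starting from $H^0=C_*$ and ending at $H^m=0$, gives
\[
  \tr(f_n,C_n)=\sum_{k}\tr\bigl(\gr_H^kf_n,(\gr_H^kC)_n\bigr).
\]
Taking the alternating sum over $n$ (a finite sum, since $C_*$ is bounded) gives $\str_{C_*}(f)=\sum_k\str_{\gr_H^kC}(\gr_H^kf)$.

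\emph{Step 2 (Hopf trace on each layer).} By Definition~\ref{def:uncurving}, each $\gr_H^kf$ is a chain endomorphism of the bounded finite-dimensional complex $\gr_H^kC_*$. It therefore suffices to prove the classical identity $\str_{D_*}(g)=L(H_*(g),H_*(D_*))$ for a chain endomorphism $g$ of such a complex $D_*$. In each degree, $g$ preserves $B_n=\im d_{n+1}$ and $Z_n=\ker d_n$, giving equivariant short exact sequences
\[
  0\to B_n\to Z_n\to H_n\to0,\qquad 0\to Z_n\to D_n\xrightarrow{d_n}B_{n-1}\to0.
\]
The differential intertwines $g$ on $D_n/Z_n$ with $g$ on $B_{n-1}$. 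Lemma~\ref{lem:trace-additivity} then yields
\[
  \tr(g,D_n)=\tr(g,B_n)+\tr(H_n(g))+\tr(g,B_{n-1}).
\]
In the alternating sum the boundary terms telescope and cancel, leaving $\sum_n(-1)^n\tr(H_n(g))$.

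Substituting this identity into Step 1 layer by layer gives the theorem. The only point needing care is the bookkeeping in Step 2: one must check that $d_n$ induces an \emph{equivariant} isomorphism $D_n/Z_n\cong B_{n-1}$, which uses that $g$ is a chain map. This is exactly where uncurvedness of the layers enters, because for a general $f$ on $C_*$ these subspaces need not be stable. Everything else is routine. The sign conventions are harmless, since $f$ has degree zero and the Koszul rule introduces no signs into the traces.
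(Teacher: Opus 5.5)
Your proposal is correct and follows essentially the same route as the paper's proof. Both use trace additivity along $0\to H^{k+1}_n\to H^k_n\to\gr_H^kC_n\to0$ to reduce to the layers, then apply the Hopf trace formula on each uncurved layer via the sequences $0\to Z_n\to D_n\to B_{n-1}\to0$ and $0\to B_n\to Z_n\to H_n\to0$, with the boundary terms cancelling in the alternating sum.
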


\begin{proof}
Trace additivity along
$0\to H^{k+1}_n\to H^k_n\to\gr_H^kC_n\to0$, followed by summation over $k$ and then
$n$, gives
\[
  \str_{C_*}(f)=\sum_k\str_{\gr_H^kC_*}(\gr_H^kf).
\]
Fix one layer $D_*=\gr_H^kC_*$ and write $g=\gr_H^kf$.  Since $g$ is a chain map, it
preserves cycles $Z_n$ and boundaries $B_n$.  Trace additivity in
\[
  0\to Z_n\to D_n\to B_{n-1}\to0,
  \qquad
  0\to B_n\to Z_n\to H_n(D)\to0
\]
gives
\[
  \tr(g,D_n)=\tr(g,B_n)+\tr(H_n(g))+\tr(g,B_{n-1}).
\]
The boundary terms cancel in the alternating sum over $n$, leaving
$\str_D(g)=L(H_*(g),H_*(D))$.  Applying this to every layer proves
\eqref{eq:homological-representation}.
\end{proof}

\begin{corollary}[Canonical Lefschetz representation]\label{cor:canonical-lefschetz}
For the canonical kernel tower $G^\bullet$ let
\[
  \gr^G C_*:=\bigoplus_k\gr_G^kC_*,
  \qquad
  \gr f:=\bigoplus_k\gr_G^kf.
\]
Then $(\gr^GC_*,\gr f)$ is an uncurved pair, functorial in $(C_*,f)$, and
\[
  \str_{C_*}(f)=L\bigl(H_*(\gr f),H_*(\gr^GC_*)\bigr).
\]
\end{corollary}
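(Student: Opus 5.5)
The plan is to assemble the corollary from three earlier results: Proposition~\ref{prop:canonical-layers} (each layer is uncurved), Corollary~\ref{cor:tower-functorial} (functoriality of the layers), and Theorem~\ref{thm:homological-representation} applied to $H^\bullet=G^\bullet$. The only additional input is that direct sums behave well with respect to the chain-map condition, homology, and traces.

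First we check that $G^\bullet$ is an uncurving filtration in the sense of Definition~\ref{def:uncurving}, so that Theorem~\ref{thm:homological-representation} applies. Each $G^kX=K^kX$ is an $f$-stable subcomplex, since $\Delta$ is an $f$-stable subcomplex by Proposition~\ref{prop:defect}. The filtration is finite with $G^m=0$ by Lemma~\ref{lem:termination}. Each quotient is uncurved by \eqref{eq:layer-reflector}. Theorem~\ref{thm:homological-representation} then gives
\[
\str_{C_*}(f)=\sum_k L\bigl(H_*(\gr_G^kf),H_*(\gr_G^kC_*)\bigr),
\]
where the sum is finite.

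Next we check that $(\gr^GC_*,\gr f)$ is an uncurved pair. The differential on $\bigoplus_k\gr_G^kC_*$ is the direct sum of the layer differentials, so $[d,\gr f]$ is the direct sum of the commutators $[d,\gr_G^kf]$, each of which vanishes. Because only finitely many layers are nonzero, the total complex is bounded and its chain groups are finite-dimensional. Homology commutes with finite direct sums, and $H_n(\gr f)=\bigoplus_k H_n(\gr_G^kf)$ is block diagonal. Taking traces blockwise gives
\[
L\bigl(H_*(\gr f),H_*(\gr^GC_*)\bigr)=\sum_k L\bigl(H_*(\gr_G^kf),H_*(\gr_G^kC_*)\bigr),
\]
which combined with the previous display yields the formula.

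Finally, for functoriality: a morphism $\phi\colon X\to Y$ induces morphisms $\gr_G^k(\phi)$ of uncurved pairs by Corollary~\ref{cor:tower-functorial}. Their direct sum is a chain map intertwining $\gr f$ with $\gr g$. Compatibility with identities and composition follows from the functoriality of $K^k$ and of the quotient maps in \eqref{eq:kernel-layer}. None of these steps is a genuine obstacle. The only care needed is to make precise the finiteness of the direct sum, so that boundedness and finite-dimensionality are preserved, and to verify that the layer-wise definition of $\gr$ on morphisms respects composition, which is immediate from the naturality of \eqref{eq:kernel-layer}.
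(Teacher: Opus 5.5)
Your proposal is correct and matches the paper's proof: the layers are uncurved by Proposition~\ref{prop:canonical-layers}, functoriality comes from Corollary~\ref{cor:tower-functorial}, and the trace identity is Theorem~\ref{thm:homological-representation} with $H^\bullet=G^\bullet$. You also make explicit two points the paper leaves implicit: $G^\bullet$ is a genuine finite uncurving filtration by $f$-stable subcomplexes (Lemma~\ref{lem:termination}), and the chain-map condition, homology, and Lefschetz numbers are all additive over the finite direct sum of layers.
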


\begin{proof}
The layers are uncurved by Proposition~\ref{prop:canonical-layers}, functoriality is
Corollary~\ref{cor:tower-functorial}, and the trace identity is
Theorem~\ref{thm:homological-representation}.
\end{proof}

\begin{corollary}[Hopf trace formula]\label{cor:hopf-trace}
If $f$ is a chain endomorphism of $C_*$, then
\[
  \str_{C_*}(f)=L(H_*(f),H_*(C_*)).
\]
\end{corollary}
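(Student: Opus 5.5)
The plan is to obtain the Hopf trace formula as the degenerate case of Theorem~\ref{thm:homological-representation} in which the uncurving filtration has a single nonzero layer. If $f$ is a chain endomorphism, then $[d,f]=0$, so the trivial filtration
\[
  C_*=H^0\supseteq H^1=0
\]
consists of $f$-stable subcomplexes and satisfies $[d,f]H^0=0\subseteq H^1$. It is therefore an uncurving filtration in the sense of Definition~\ref{def:uncurving}. Its only nonzero layer is $\gr_H^0C_*=C_*$, with $\gr_H^0f=f$.

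Substituting this filtration into \eqref{eq:homological-representation} collapses the sum to the single term $L(H_*(f),H_*(C_*))$, which is exactly the claim. Alternatively, one can use the canonical tower. By Remark~\ref{rem:depth}, the uncurving depth is at most one, since $\Delta(C,f)=0$ when $f$ is already a chain map. Hence $G^1X=0$ and $\gr^GC_*=C_*$, and Corollary~\ref{cor:canonical-lefschetz} gives the same identity.

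I expect no real obstacle. The only point needing care is that the trivial filtration qualifies as uncurving. This holds because $f$ commutes with $d$, so $H^0/H^1=C_*$ is an uncurved pair. The substantive content, namely that boundary traces cancel in the alternating sum, is already contained in the single-layer computation in the proof of Theorem~\ref{thm:homological-representation}, which uses trace additivity (Lemma~\ref{lem:trace-additivity}) on $0\to Z_n\to C_n\to B_{n-1}\to0$ and $0\to B_n\to Z_n\to H_n\to0$.
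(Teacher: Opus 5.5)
Your proposal is correct and matches the paper's proof. The paper likewise observes that $C_*\supseteq 0$ is an uncurving filtration when $f$ is a chain map, and reads off the formula as the one-layer case of Theorem~\ref{thm:homological-representation}; your alternative route through the canonical tower (where $\Delta(C,f)=0$) is also valid.
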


\begin{proof}
The filtration $C_*\supseteq0$ is uncurving, so the assertion is the one-layer case of
Theorem~\ref{thm:homological-representation}.
\end{proof}

\begin{corollary}[Euler--Poincar\'e]\label{cor:euler}
For every bounded finite-dimensional chain complex,
\[
  \chi(C_*)=\sum_n(-1)^n\dim H_n(C_*).
\]
\end{corollary}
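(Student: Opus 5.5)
The plan is to derive Euler--Poincar\'e as the special case $f=\id_{C_*}$ of the Hopf trace formula, Corollary~\ref{cor:hopf-trace}.

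First, observe that the identity is a degree-zero graded endomorphism commuting with $d$. Hence $(C_*,\id)$ is an uncurved pair. Its defect $\Delta(C,\id)$ vanishes and the canonical tower has a single layer. Corollary~\ref{cor:hopf-trace} therefore applies directly and gives
\[
  \str_{C_*}(\id)=L\bigl(H_*(\id),H_*(C_*)\bigr).
\]

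Second, compute both sides.
\begin{itemize}
\item The left side is $\sum_n(-1)^n\tr(\id_{C_n})=\sum_n(-1)^n\dim C_n$. This is the definition of $\chi(C_*)$; the sum is finite because the complex is bounded and each $C_n$ is finite-dimensional.
\item Functoriality of homology gives $H_n(\id)=\id_{H_n(C_*)}$. Each $H_n(C_*)$ is finite-dimensional as a subquotient of $C_n$. Hence $\tr(H_n(\id))=\dim H_n(C_*)$, and the Lefschetz number equals $\sum_n(-1)^n\dim H_n(C_*)$.
\end{itemize}

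The only point to watch is that traces are valued in $\K$, so the identity is first obtained as an equality of images of integers in $\K$. Because $\K$ has characteristic zero, $\mathbb Z\to\K$ is injective, and the equality holds in $\mathbb Z$. This is the one step that uses the standing convention on $\K$; otherwise there is no real obstacle.

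Alternatively, one could bypass the corollary and use only Lemma~\ref{lem:trace-additivity}, as in the proof of Theorem~\ref{thm:homological-representation}. The dimension counts $\dim C_n=\dim B_n+\dim H_n+\dim B_{n-1}$ telescope in the alternating sum, which avoids any characteristic issue.
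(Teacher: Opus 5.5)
Your proposal is correct and matches the paper's proof, which is exactly the one-line specialization of Corollary~\ref{cor:hopf-trace} to $f=\id_{C_*}$. Your remark about lifting the equality from $\K$ to $\mathbb Z$ is a valid detail the paper leaves implicit. It uses characteristic zero, or alternatively the dimension-count telescoping, which needs no hypothesis on $\K$.
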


\begin{proof}
Apply Corollary~\ref{cor:hopf-trace} to the identity endomorphism.
\end{proof}

\begin{corollary}[First-stage defect decomposition]\label{cor:defect-decomposition}
For every pair $(C_*,f)$,
\begin{equation}\label{eq:defect-decomposition}
  \str_{C_*}(f)
  =\str_{\Delta(C,f)}(f)
   +L\bigl(H_*(\widetilde f),H_*(Q(C,f))\bigr).
\end{equation}
For the canonical tower the first term is the sum of the Lefschetz numbers of all
layers of positive filtration degree.
\end{corollary}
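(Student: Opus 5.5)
The plan is to apply trace additivity to the first layer of the canonical tower and the Hopf trace formula to the quotient. By Proposition~\ref{prop:defect}, $\Delta(C,f)$ is an $f$-stable subcomplex of $C_*$. Hence in each degree $n$ we have an $f$-equivariant short exact sequence
\[
  0\to\Delta(C,f)_n\to C_n\to Q(C,f)_n\to0,
\]
where the quotient carries $\widetilde f$. Lemma~\ref{lem:trace-additivity} gives $\tr(f,C_n)=\tr(f,\Delta(C,f)_n)+\tr(\widetilde f,Q(C,f)_n)$. Taking the alternating sum over $n$ yields
\[
  \str_{C_*}(f)=\str_{\Delta(C,f)}(f)+\str_{Q(C,f)}(\widetilde f).
\]
All sums are finite by boundedness and finite-dimensionality.

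Next, $\widetilde f$ is a chain endomorphism of $Q(C,f)$ by Proposition~\ref{prop:defect}. Corollary~\ref{cor:hopf-trace} therefore replaces $\str_{Q(C,f)}(\widetilde f)$ by $L\bigl(H_*(\widetilde f),H_*(Q(C,f))\bigr)$. This proves \eqref{eq:defect-decomposition}.

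For the second assertion, I would consider the pair $KX=(\Delta(C,f),f|_{\Delta})=G^1X$. Its canonical tower is $G^k(KX)=K^{k}(KX)=G^{k+1}X$, so its layers are exactly $\gr^{k}_GX$ for $k\geq1$. By Lemma~\ref{lem:termination} this tower is finite, so it is an uncurving filtration of $KX$. Applying Theorem~\ref{thm:homological-representation} to it gives
\[
  \str_{\Delta(C,f)}(f)=\sum_{k\geq1}L\bigl(H_*(\gr_G^kf),H_*(\gr_G^kC_*)\bigr).
\]
Note also that the $k=0$ layer is $\gr^0_GX\cong IF(X)=(Q(C,f),\widetilde f)$ by Proposition~\ref{prop:canonical-layers}. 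So the decomposition is consistent with Corollary~\ref{cor:canonical-lefschetz}.

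There is no real obstacle here. The only point needing care is the reindexing $G^k(KX)=G^{k+1}X$, which holds directly from $G^k=K^k$.
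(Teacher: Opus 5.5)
Your proposal is correct and follows the paper's proof: trace additivity for $0\to\Delta(C,f)\to C_*\to Q(C,f)\to0$, then Corollary~\ref{cor:hopf-trace} for the uncurved quotient, then Theorem~\ref{thm:homological-representation} applied to $\Delta(C,f)=G^1C$. Your explicit reindexing $G^k(KX)=K^{k+1}X=G^{k+1}X$, together with the appeal to Lemma~\ref{lem:termination} to make this tower an uncurving filtration, simply spells out what the paper leaves implicit in that last step.
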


\begin{proof}
Trace additivity for
$0\to\Delta(C,f)\to C_*\to Q(C,f)\to0$ gives the first term plus
$\str_{Q(C,f)}(\widetilde f)$, and Corollary~\ref{cor:hopf-trace} identifies the latter
with its Lefschetz number.  Applying Theorem~\ref{thm:homological-representation} to
$\Delta(C,f)=G^1C$ gives the final assertion.
\end{proof}

\begin{remark}\label{rem:first-stage}
The first-stage decomposition separates the universal uncurved quotient from the
unresolved kernel.  If $\str_{\Delta(C,f)}(f)=0$, the original supertrace is already the
Lefschetz number of the reflector $F(C,f)$.  Otherwise the kernel tower resolves this
remaining contribution into subsequent uncurved layers.
\end{remark}

\begin{corollary}[Trace along the Rees degeneration]\label{cor:rees-trace}
For an uncurving filtration $H^\bullet$,
\[
  \str_{R_H(C)}(f)=\str_{C_*}(f)=\str_{\gr_HC_*}(\gr_Hf).
\]
Thus the homological representation theorem is the Hopf trace formula on the uncurved
special fiber of the Rees family.
\end{corollary}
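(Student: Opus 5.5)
The identity $\str_{C_*}(f)=\str_{\gr_HC_*}(\gr_Hf)$ is already contained in the first step of the proof of Theorem~\ref{thm:homological-representation}: apply Lemma~\ref{lem:trace-additivity} to $0\to H^{k+1}_n\to H^k_n\to\gr_H^kC_n\to0$, sum over $k$ (the filtration is finite), and take the alternating sum over $n$. The substance of the corollary is therefore the leftmost term. We must first fix what $\str_{R_H(C)}(f)$ means, since $R_H(C)$ is a complex of $\K[u]$-modules rather than $\K$-vector spaces.

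By Proposition~\ref{prop:rees}, each $R_H(C)_n$ is a finitely generated free $\K[u]$-module of rank $\dim_\K C_n$, and $f$ acts $\K[u]$-linearly. So $\tr(f,R_H(C)_n)$ is well defined as an element of $\K[u]$, and $\str_{R_H(C)}(f)\in\K[u]$. The plan is to show that this polynomial is in fact the constant $\str_{C_*}(f)$.

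Concretely, choose a basis $e_1,\dots,e_N$ of $C_n$ adapted to the finite filtration $H^\bullet C_n$. Let $w_i$ be the largest $k$ with $e_i\in H^k$. The proof of Proposition~\ref{prop:rees} shows that the elements $e_iu^{-w_i}$ form a $\K[u]$-basis of $R_H(C)_n$. Since $f$ preserves each $H^k$, the vector $f(e_i)$ lies in $H^{w_i}$. Hence its expansion involves only basis vectors $e_j$ with $w_j\ge w_i$, so
\[
  f(e_iu^{-w_i})=\sum_j a_{ji}\,u^{w_j-w_i}\,e_ju^{-w_j},
\]
with all exponents nonnegative.

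The diagonal entries are exactly $a_{ii}u^0=a_{ii}$. Therefore $\tr(f,R_H(C)_n)=\sum_i a_{ii}=\tr(f,C_n)$, a constant polynomial, and the alternating sum gives $\str_{R_H(C)}(f)=\str_{C_*}(f)$.

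An alternative that avoids bases is to use base change. Trace of an endomorphism of a free module commutes with reduction along $\K[u]\to\K[u]/(u-\lambda)$. Combined with the fiber isomorphisms \eqref{eq:rees-fibers}, this says the value of the polynomial $\str_{R_H(C)}(f)$ at $u=1$ is $\str_{C_*}(f)$ and at $u=0$ is $\str_{\gr_HC_*}(\gr_Hf)$. The basis computation shows the polynomial is constant, so the two specializations agree; this recovers the second equality independently of trace additivity. Finally, $\gr_Hf$ is a chain map on the special fiber, so Corollary~\ref{cor:hopf-trace} applied there gives the Lefschetz number, which justifies the closing interpretive sentence.

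The main obstacle is this well-definedness and constancy of the trace over $\K[u]$. Everything else is bookkeeping already carried out in Theorem~\ref{thm:homological-representation}. The key point is that the off-diagonal entries may involve positive powers of $u$, but the diagonal entries cannot, precisely because $f$ is filtration-preserving.
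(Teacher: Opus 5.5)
Your proposal is correct and is essentially the paper's argument. The paper also reads $\str_{R_H(C)}(f)$ as a trace over $\K[u]$, gets its constancy by localizing at $u$ (where $R_H(C)_n[u^{-1}]=C_n[u,u^{-1}]$ and $f$ acts as $f_n$), and then specializes at $u=0$ via \eqref{eq:rees-fibers}, exactly as in your base-change alternative; your adapted-basis computation, whose diagonal entries $a_{ii}$ carry no power of $u$ because $f$ preserves the filtration, is that localization step written in coordinates, since the basis $e_iu^{-w_i}$ differs from $e_i$ by a diagonal unit change of basis over $\K[u,u^{-1}]$.
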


\begin{proof}
The trace of $f$ on the free $\K[u]$-module $R_H(C)_n$ equals the trace after
localization at $u$, hence equals $\tr(f_n)$; specialization at $u=0$ gives the trace on
$\gr_HC_n$.  Taking alternating sums gives the displayed equality.  The final statement
follows from Corollary~\ref{cor:hopf-trace} applied to the special fiber.
\end{proof}

\section{Examples}\label{sec:examples}

The first two finite-dimensional examples and Proposition~\ref{prop:multiplication-curvature}
are elementary test cases constructed here to exhibit the defect and kernel-tower formulas.
The remaining examples are classical constructions or applications whose provenance is
indicated explicitly below.  The zeta example packages all power supertraces
multiplicatively, while the final two examples connect the uncurving mechanism with
standard topological and geometric constructions.

\begin{example}[Nonvanishing defect contribution]\label{ex:interval}
Let $C_*$ be the cellular chain complex of the closed interval, $C_1=\K e_1$,
$C_0=\K e_0^1\oplus\K e_0^2$, $d(e_1)=e_0^2-e_0^1$, and let $f$ be the identity on
$C_1$ and the transposition of $e_0^1,e_0^2$ on $C_0$, so that
$\str_{C_*}(f)=0-1=-1$.  For odd $\ell$,
$[d,f^\ell](e_1)=2(e_0^2-e_0^1)$, while for even $\ell$ it vanishes.  Hence
$\Delta(C,f)=\K(e_0^2-e_0^1)$ in degree $0$.  The quotient has
$Q_1=\K e_1$, $Q_0=\K[e_0^1]$, $\widetilde d=0$ and $\widetilde f=\id$, so
$L(H_*(\widetilde f),H_*(Q(C,f)))=1-1=0$.  Since $f$ acts on $\Delta_0(C,f)$ as $-1$,
Corollary~\ref{cor:defect-decomposition} reads $-1=(-1)+0$.  The canonical tower is
$C_*\supset\Delta(C,f)\supset0$, of uncurving depth $2$.
\end{example}

\begin{example}[Nonzero defect, vanishing defect contribution]\label{ex:intermediate}
Let $C_1=\K e_1$, $C_0=\bigoplus_{i=1}^4\K e_0^{(i)}$,
$d(e_1)=e_0^{(1)}+e_0^{(2)}$, $f_1=\id$, and
\[
  f_0(e_0^{(1)})=e_0^{(1)},\quad
  f_0(e_0^{(2)})=e_0^{(2)}+e_0^{(3)},\quad
  f_0(e_0^{(3)})=e_0^{(4)},\quad
  f_0(e_0^{(4)})=e_0^{(3)}.
\]
Then
$[d,f](e_1)=-e_0^{(3)}$ and the $f$-stable span of this vector is
$\Delta(C,f)=\K e_0^{(3)}\oplus\K e_0^{(4)}$.  On the defect $f$ swaps the two basis
vectors, so $\str_{\Delta(C,f)}(f)=0$ although $f$ is not a chain map.  The quotient has
$Q_1=\K[e_1]$, $Q_0=\K[e_0^{(1)}]\oplus\K[e_0^{(2)}]$, $\widetilde f=\id$, and
$H_*(Q(C,f))=\K$ in degree $0$.  Hence
$L(H_*(\widetilde f),H_*(Q(C,f)))=1=\str_{C_*}(f)$.  Again the uncurving depth is $2$.
\end{example}

\begin{example}[Lefschetz zeta functions]\label{ex:lefschetz-zeta}
The classical Lefschetz zeta function packages the Lefschetz numbers of the iterates of a
self-map or chain endomorphism into a single rational function; see
\cite[p.~122]{Milnor1968Cyclic} and \cite{Franks1976}.  In the present setting, for a pair
$(C_*,f)$ define
\begin{equation}\label{eq:curved-zeta}
  \mathcal Z(C_*,f;t)
  :=\exp\!\left(\sum_{r\geq1}\frac{\str_{C_*}(f^r)}{r}t^r\right)
  =\prod_n\det(1-tf_n)^{(-1)^{n+1}}.
\end{equation}
The determinant identity follows from
$\log\det(1-tT)=-\sum_{r\geq1}\tr(T^r)t^r/r$.  If $H^\bullet$ is any uncurving
filtration, then every $f^r$ preserves it and
$\gr_H^k(f^r)=(\gr_H^kf)^r$.  Applying
Theorem~\ref{thm:homological-representation} to every power gives
\[
  \str_{C_*}(f^r)
  =\sum_k L\bigl(H_*((\gr_H^kf)^r),H_*(\gr_H^kC_*)\bigr).
\]
Hence
\begin{equation}\label{eq:zeta-example-factorization}
  \mathcal Z(C_*,f;t)
  =\prod_k\zeta(\gr_H^kf;t)
  =\zeta(\gr_Hf;t),
\end{equation}
where, for a chain endomorphism $g$ of a finite complex $D_*$,
\[
  \zeta(g;t)
  :=\exp\!\left(\sum_{r\geq1}
       \frac{L(H_*(g^r),H_*(D_*))}{r}t^r\right)
  =\prod_n\det(1-tH_n(g))^{(-1)^{n+1}}
\]
is the ordinary Lefschetz zeta function; the determinant description is the standard
homological expression for this zeta function, cf.~\cite[p.~122]{Milnor1968Cyclic} and
\cite{Franks1976}.  Thus uncurving represents the complete sequence of power supertraces
by a classical homological zeta function.  For the
identity,
\[
  \zeta(\id_C;t)=(1-t)^{-\chi(C_*)},
\]
so Euler characteristic is the exponent of the Lefschetz zeta function of the identity.
\end{example}

\begin{proposition}[Multiplication by a non-closed element]\label{prop:multiplication-curvature}
Let $A$ be a dg-algebra, $M$ a left dg-module over $A$, and $a\in A_0$.  For the
degree-zero endomorphism $f=L_a$ of $M$ one has
\[
  [d,f]=L_{da},\qquad
  \Delta(M,L_a)=\sum_{j\geq0}a^j(da)M.
\]
If $A$ is graded-commutative, then
\[
  \Delta(M,L_a)=(da)M,
\]
and $L_a$ is a chain map on the defect.  Consequently the canonical tower has
uncurving depth at most $2$.
\end{proposition}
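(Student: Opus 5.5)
The plan is to compute the commutator directly from the graded Leibniz rule, then apply the definition of $\Delta$ and Proposition~\ref{prop:defect}, using commutativity only in the final step.

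\textbf{Commutator.} Since $a$ has degree zero, the Leibniz rule gives, for $m\in M$,
\[
  d(am)=(da)m+a\,dm,
\]
with no Koszul sign. Hence
\[
  [d,L_a](m)=d(am)-a\,dm=(da)m,
\]
that is, $[d,f]=L_{da}$ and $\im[d,f]=(da)M$.

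\textbf{The defect.} Since $f^j=L_{a^j}$, Definition~\ref{def:defect} gives
\[
  \Delta(M,L_a)=\sum_{j\geq0}a^j(da)M.
\]

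\textbf{The commutative case.} Now let $A$ be graded-commutative. Because $a$ has degree zero, it commutes with every element of $A$, so
\[
  a^j(da)M=(da)a^jM\subseteq(da)M.
\]
Therefore $\Delta(M,L_a)=(da)M$.

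\textbf{$L_a$ on the defect.} The defect is a subcomplex by Proposition~\ref{prop:defect}, and $L_a$ preserves it. By the commutator formula, the curvature of $L_a$ restricted to the defect is $L_{da}$ restricted to $(da)M$. Its image is
\[
  (da)(da)M=(da)^2M.
\]
Since $da$ has odd degree $-1$, graded commutativity gives $(da)(da)=-(da)(da)$. Because the characteristic is zero, this forces $(da)^2=0$. Hence $L_a$ commutes with $d$ on $(da)M$, so $L_a$ is a chain map on the defect.

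\textbf{Depth.} The restriction of $f$ to $\Delta$ is a chain map, so Remark~\ref{rem:depth} shows the canonical tower has depth at most $2$. Concretely, $K(\Delta)=0$, so $G^2=0$.

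\textbf{Main obstacle.} The only delicate point is the sign bookkeeping: checking that $d(am)$ carries no Koszul sign because $|a|=0$, and that $(da)^2=0$. The latter is where both graded commutativity and $\operatorname{char}\K\neq2$ are genuinely used.
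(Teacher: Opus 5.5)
Your proof is correct and follows the paper's argument essentially step for step. The Leibniz rule gives $[d,L_a]=L_{da}$, the definition of the defect yields $\sum_{j\geq0}a^j(da)M$, the degree-zero element $a$ commuting with $da$ collapses this to $(da)M$, and $(da)^2=0$ (odd degree, characteristic not $2$) makes $L_a$ a chain map on the defect; your appeal to Remark~\ref{rem:depth} (equivalently $K(\Delta)=0$) for the depth bound just spells out a step the paper leaves implicit.
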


\begin{proof}
The dg-module Leibniz rule gives $d(am)-a\,d(m)=(da)m$, hence
$[d,L_a]=L_{da}$.  Formula~\eqref{eq:defect-span} gives
\[
  \Delta(M,L_a)=\sum_{j\geq0}L_a^j((da)M)
               =\sum_{j\geq0}a^j(da)M.
\]
If $A$ is graded-commutative, the degree-zero element $a$ commutes with $da$, so the
right-hand side is $(da)M$.  Since $da$ has odd degree and $\operatorname{char}\K=0$,
$(da)^2=0$.  Thus $[d,L_a]$ vanishes on $(da)M$.
\end{proof}

\begin{example}[Banchoff's vertex index and Reidemeister torsion]\label{ex:depth-three}
Let $X$ be a finite simplicial complex with a total ordering of its vertices and let
$C_{-q}:=C^q(X;\K)$, equipped with the Alexander--Whitney cup product
\cite[Section~3.2]{Hatcher02} and the cochain differential.  For a vertex $v$, let $\chi_v\in C^0(X;\K)$ be its indicator cochain and
put $f_v:=\chi_v\smile-$.  If $\sigma=[v_0,\ldots,v_q]$ is written in increasing order,
then
\[
  (\chi_v\smile\beta)(\sigma)=\chi_v(v_0)\beta(\sigma).
\]
Thus $f_v$ is the projection onto the span of the dual simplices $\sigma^*$ with
$\min\sigma=v$.  Writing $\operatorname{lk}^+(v)$ for the part of the link of $v$ on
vertices larger than $v$, one obtains
\begin{equation}\label{eq:banchoff-index}
  \str(f_v)
  =\sum_{\min\sigma=v}(-1)^{\dim\sigma}
  =1-\chi(\operatorname{lk}^+(v)).
\end{equation}
This is Banchoff's critical-point index for a height function decreasing along the
chosen vertex order; Banchoff's critical-point theorem is
\cite[p.~246, Theorem~1]{Banchoff1967}.  Reversing the height interchanges upper and lower
links; for the now-standard lower-link formulation see
\cite[Section~VI.3, pp.~135--139]{EdelsbrunnerHarer2010}.  Since $\sum_v\chi_v=1$, one
has $\sum_vf_v=\id$, and summing \eqref{eq:banchoff-index} gives Banchoff's index formula
$\sum_v\operatorname{ind}(v)=\chi(X)$.

The defect is generated by multiplication by $\delta\chi_v$.  A direct calculation gives
\begin{equation}\label{eq:banchoff-defect}
  \Delta(C,f_v)
  =\left\langle [v,\tau]^*:\ \varnothing\neq\tau\in\operatorname{lk}^+(v)\right\rangle
   +\left\langle \beta_\sigma:\ \min\sigma=v\right\rangle,
\end{equation}
where
\[
  \beta_\sigma
  :=\sum_{\substack{u<v\\ {}[u,\sigma]\in X}}[u,\sigma]^*.
\]
Indeed $[d,f_v]=L_{\delta\chi_v}$ sends a basis cochain either to a simplex
$[v,\tau]^*$ with all vertices of $\tau$ larger than $v$, or to the corresponding sum
$\beta_\sigma$ when the basis simplex $\sigma$ begins at $v$; moreover $f_v$ fixes the
first family and kills the second, so their span is already $f_v$-stable.

On $Q(C,f_v)$, the induced map $\widetilde f_v$ is the projection onto the class of
$[v]^*$ in degree $0$ and is zero in all other degrees.  This class is a cycle because
\[
  \delta[v]^*
   =\beta_{[v]}-\sum_{\substack{w>v\\ {}[v,w]\in X}}[v,w]^*
   \in\Delta(C,f_v),
\]
and it cannot be a boundary in degree $0$.  Hence
\begin{equation}\label{eq:banchoff-splitting}
  L(H_*(\widetilde f_v),H_*(Q(C,f_v)))=1,
  \qquad
  \str_{\Delta(C,f_v)}(f_v)=-\chi(\operatorname{lk}^+(v)).
\end{equation}
Thus Corollary~\ref{cor:defect-decomposition} is precisely Banchoff's ``$1$ minus the
link'': the vertex is the homological contribution and the upper link is the defect
contribution.

The kernel tower records the noncommutativity of the cochain product.  The commutator
sends the first family in \eqref{eq:banchoff-defect} into the second and kills the
second, giving
\[
  G^2
   =\left\langle\beta_\sigma:\ \min\sigma=v,\ \dim\sigma\geq1\right\rangle,
  \qquad G^3=0.
\]
Consequently the uncurving depth is $3$ exactly when $v$ is the middle vertex of some
$2$-simplex, is $2$ when $v$ is non-isolated but is never such a middle vertex, and is
$1$ when $v$ is isolated.  For the simplex $[0,1,2]$ and $v=1$,
\[
  \str(f_1)=0,\qquad
  \Delta(C,f_1)=\langle[01]^*,[12]^*,[012]^*\rangle,\qquad
  G^2=\langle[012]^*\rangle,
\]
and the three uncurved layers contribute $1,-1,0$.

The multiplicative refinement follows from Example~\ref{ex:lefschetz-zeta}.  Since
$f_v^2=f_v$,
\begin{equation}\label{eq:banchoff-local-zeta}
  \mathcal Z(C^\bullet(X),f_v;t)
  =(1-t)^{-\operatorname{ind}(v)}
  =(1-t)^{\chi(\operatorname{lk}^+(v))-1}.
\end{equation}
The idempotents $f_v$ are pairwise orthogonal and sum to the identity, so
\begin{equation}\label{eq:banchoff-global-zeta}
  \prod_v\mathcal Z(C^\bullet(X),f_v;t)
  =(1-t)^{-\chi(X)}
  =\zeta(\id_{C^\bullet(X)};t).
\end{equation}
Thus Banchoff's additive index theorem exponentiates to the Lefschetz zeta function of
the identity.  Over $\mathbb Q$, let $\tau(X\times S^1;t)$ denote the Reidemeister
torsion of the chain complex of the infinite cyclic cover
$X\times\mathbb R\to X\times S^1$ after extension of scalars from
$\mathbb Q[t,t^{-1}]$ to $\mathbb Q(t)$.  With Milnor's normalization of the
torsion--zeta identity, this torsion is defined up to the classical unit ambiguity
$\pm t^m$ and satisfies
\begin{equation}\label{eq:banchoff-reidemeister}
  \tau(X\times S^1;t)\doteq(1-t)^{\chi(X)}
  =\prod_v(1-t)^{\operatorname{ind}(v)}
  =\prod_v\mathcal Z(C^\bullet(X),f_v;t)^{-1}.
\end{equation}
This is the multiplicative Euler-characteristic aspect of Reidemeister torsion in the
present setting.  The normalization is worth recording because reciprocal conventions
for Reidemeister torsion also occur in the literature.  Milnor proves the relevant
infinite-cyclic-cover torsion--zeta relation in
\cite[Section~3, pp.~121--123]{Milnor1968Cyclic}; see also
\cite[Introduction and Corollary~5.2]{KleinMalkiewich2022} for a modern $K$-theoretic
formulation.
\end{example}

\begin{example}[Relative de Rham complex as the uncurved quotient]\label{ex:de-rham}
The reflector, kernel tower, and Rees construction do not require finite-dimensional
chain groups.  Let $X$ be a smooth manifold and let $A=M=\Omega_X^\bullet$, regraded
homologically by $M_{-q}=\Omega_X^q$, with its de Rham differential.  For a function
$\phi\in\Omega_X^0$, let $f=L_\phi$.  Proposition~\ref{prop:multiplication-curvature}
gives
\begin{equation}\label{eq:relative-de-rham-quotient}
  \Delta(M,L_\phi)=d\phi\wedge\Omega_X^{\bullet-1},
  \qquad
  Q(M,L_\phi)
   =\Omega_X^\bullet/(d\phi\wedge\Omega_X^{\bullet-1}).
\end{equation}
In the algebraic setting, suppose now that $\phi\colon X\to S$ is a smooth morphism of
smooth varieties to a one-dimensional smooth base with local coordinate $t$ and
$\phi^*t=\phi$.  Then the quotient on the right is the usual relative de Rham complex
$\Omega_{X/S}^\bullet$; see the construction of relative de Rham cohomology in
\cite[Section~2, pp.~201--203]{KatzOda1968}.  The
induced endomorphism is multiplication by the base coordinate.  Thus the reflection
theorem says that $\Omega_{X/S}^\bullet$ is the universal quotient of
$\Omega_X^\bullet$ on which multiplication by $\phi$ becomes a chain map; in this
one-parameter situation, passing to relative differentials imposes precisely the
relation $d\phi=0$.

Independently, the same quotient construction makes sense for germs at
$0\in\mathbb C^n$, where the map defined by $\phi$ need not be smooth.  If
$\phi\in\mathcal O_{\mathbb C^n,0}$ has an isolated critical point, then in top degree
\[
  Q^n
   =\Omega^n_{\mathbb C^n,0}/d\phi\wedge\Omega^{n-1}_{\mathbb C^n,0}
   \cong
   \frac{\mathcal O_{\mathbb C^n,0}}
        {(\partial_1\phi,\ldots,\partial_n\phi)}\,dx_1\wedge\cdots\wedge dx_n,
\]
so $\dim_\mathbb C Q^n$ is the Milnor number $\mu(\phi)$; this is the classical
Jacobian (Milnor) algebra.  For the relation between the Jacobian quotient and the Milnor
number see \cite[Section~7 and Appendix~B]{Milnor1968Singular}; its role in the
Gauss--Manin description of an isolated hypersurface singularity is developed by
Brieskorn in \cite{Brieskorn1970}.  Since $(d\phi)^2=0$, the
restriction of $L_\phi$ to $G^1=d\phi\wedge\Omega_X^\bullet$ is a chain map, so the
uncurving depth is at most $2$.  Proposition~\ref{prop:rees} gives the corresponding
flat one-parameter interpolation between the absolute complex and its uncurved
associated graded; no trace assertion is made in this infinite-dimensional setting.
\end{example}

\begin{remark}[Beyond one endomorphism]\label{rem:algebra-actions}
A pair $(C_*,f)$ may be viewed as a chain complex whose underlying graded vector space
carries an action of $\K[t]$, with $t$ acting by $f$, without requiring the differential
to be $\K[t]$-linear.  The obstruction is the derivation
\[
  \delta_f\colon\K[t]\longrightarrow\End_{-1}(C_*),
  \qquad
  \delta_f(p)=[d,p(f)].
\]
More generally, for an action $\rho\colon B\to\End_0(C_*)$ of an associative algebra,
set
\[
  \Delta_B(C,\rho):=\sum_{b\in B}\im[d,\rho(b)].
\]
This is a subcomplex because $d[d,\rho(b)]=-[d,\rho(b)]d$, and it is $B$-stable by the
identity
\[
  \rho(a)[d,\rho(b)]
   =[d,\rho(ab)]-[d,\rho(a)]\rho(b).
\]
Hence $C/\Delta_B(C,\rho)$ carries an action of $B$ by chain maps.  The same kernel
argument as in Theorem~\ref{thm:reflection} shows that it is the universal quotient with
this property.  Thus the reflection theorem is the one-generator case of a broader
uncurving principle for algebra actions.  We leave the corresponding kernel tower and
Rees deformation for future work.
\end{remark}

\end{document}